\newtheorem{thm}{Theorem}[section]
\newtheorem{lemma}[thm]{Lemma}
\newtheorem{prop}[thm]{Proposition}
\theoremstyle{definition}
\newtheorem{definition}[thm]{Definition}
\newtheorem{question}[thm]{Question}
\newtheorem*{ack}{Acknowledgements}
\numberwithin{equation}{section}
\newcommand{\RR}{\mathbb{R}}
\DeclareMathOperator{\Diam}{diam}
\DeclareMathOperator{\Rad}{rad}
\begin{document}

%-----------------------------------------------------------------------------
% BEGIN FRONT MATTER ----------------------------------------------------------
%-----------------------------------------------------------------------------

% TITLE

\title[Rigidity of the parallel postulate]{Riemannian Rigidity of the Parallel Postulate in Total Curvature}

% AUTHOR 1

\author[J.~Ge]{Jian Ge*}
\address[Ge]{Beijing International Center for Mathematical Research, Peking University, Beijing 100871, P. R. China.}
\email{jge@math.pku.edu.cn}
\thanks{*Partially supported by the Recruitment Program for Young Professionals.}

% AUTHOR 2

\author[L.~Guijarro]{Luis Guijarro**}
\address[Guijarro]{ Department of Mathematics, Universidad Aut\'onoma de Madrid, and ICMAT CSIC-UAM-UCM-UC3M, Spain.}
\curraddr{}
\email{luis.guijarro@uam.es}
\thanks{**Supported by research grants MTM2014-57769-3-P
(MINECO) and ICMAT Severo Ochoa project SEV-2015-0554 (MINECO)}

% AUTHOR 3
\author[P.~Sol\'orzano]{Pedro Sol\'orzano***}
\address[Sol\'orzano]{Instituto de Matem\'aticas--Oaxaca, Universidad Nacional Aut\'onoma de M\'exico, Oaxaca de Ju\'arez, Mexico.}
\curraddr{}
\email{pedro.solorzano@matem.unam.mx}
\thanks{***Supported by Mexico's {\em C\'atedras CONACYT} program as a permanent fellow. }

% DATE
\date{\today}

% MATH SUBJECT CLASSIFICATION AND KEYWORDS

\subjclass[2000]{Primary: 53C23; Secondary: 53C20, 57N10}
\keywords{Riemannian surface, parallel axiom, Euclid's fifth postulate, conjugate points, flatness}

% ABSTRACT

\begin{abstract}
We study metrics in $\RR^2$ without conjugate points and with total curvature, answering, for this case, a question posed in \citet{BK1991} and \citet{BE2013}. Namely, we prove that the Euclidean plane is the only Riemannian surface free of conjugate points with total curvature that satisfies Playfair's version of the parallel postulate.
%The purpose of this note is to prove that the Euclidean plane is the only Riemannian surface with total curvature that satisfies Playfair's version of the parallel postulate.
\end{abstract}
\maketitle

%-----------------------------------------------------------------------------
% END FRONT MATTER ----------------------------------------------------------
%-----------------------------------------------------------------------------

%-----------------------------------------------------------------------------
%			MAIN MATTER	
%-----------------------------------------------------------------------------

%---------------------------------------------
% SECTION: INTRODUCTION
%---------------------------------------------

\section{Introduction and results}

The \emph {Parallel Postulate (PA)} or, more precisely, \emph {Playfair's version of Euclid's fifth postulate} says that
\begin{quote}
``Given any straight line in the plane and a point not lying on it, there exists one and only one straight line which passes through that point and never intersects the first line.''
\end{quote}
 It is now well known that the Parallel Postulate is an axiom that does not necessarily hold in non-Euclidean Geometry.

 For a Riemannian manifold $M$, the proposition 1.7 of \citet{BK1991} states that a geodesic $\gamma: \mathbb R\to M$ has no conjugate points if and only if there exists a geodesic from every point $p\not\in\gamma(\mathbb R)$  that doesn't intersect  $\gamma(\mathbb R)$.  In particular, for any $s, t\in \mathbb R$, we have that $d(\gamma(t), \gamma(s))=|t-s|$, where $d(\cdot\,,\,\cdot)$ is the distance induced by the Riemannian metric on $M$. A geodesic that satisfies this last condition is called a {\em line}.  Therefore, for a Riemannian surface, i.e. 2-dimensional Riemannian manifold, we will say that
 $M$ satisfies the {\it Parallel Axiom}, if
\begin{definition}[Parallel Axiom]\label{def:PA}
For every geodesic $\gamma$ on $M$ and $p\not\in\gamma(\mathbb R)$, there exists a unique geodesic through $p$ that does not intersect $\gamma(\mathbb R)$.
\end{definition}
Any such Riemannian surface would then have no conjugate points and thus every geodesic would actually be a line. Clearly the Euclidean plane $\mathbb R^2$ with the flat metric satisfies the parallel axiom. It is an open question whether the flat $\mathbb R^2$ is the only Riemannian manifold homeomorphic to $\mathbb R^{2}$ with such property.

\begin{question}[\citet{BE2013}, \citet{BK1991}, \citet{Cr2006}]
Is the Euclidean metric the only complete metric on the plane with
no conjugate points such that for every line $\ell$ (complete geodesic) and point $p$ not
on $\ell$ there is a unique line through $p$ parallel to (i.e. not intersecting) $\ell$?
\end{question}

In this note, we give an affirmative answer to this question with the additional assumption that $M$ has total curvature.

\begin{thm}\label{thm:Main}
Let $M$ be a Riemannian plane satisfying the Parallel Axiom and admiting total curvature. Then $M$ is isometric to the flat Euclidean plane $\mathbb R^2$.
\end{thm}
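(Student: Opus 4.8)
The plan is to exploit the two-sided nature of the Parallel Axiom together with the asymptotic geometry encoded by the total curvature, and to reduce everything to showing that the Gaussian curvature vanishes identically. First I would record the reductions coming for free from the hypotheses. By the cited Proposition 1.7 of \citet{BK1991}, the Parallel Axiom forces $M$ to have no conjugate points, so $\exp_p$ is a diffeomorphism for every $p$ and every geodesic is a line; in particular the \emph{existence} of a parallel through any $p\notin\gamma(\RR)$ is automatic, and the genuine extra content of the axiom is \emph{uniqueness}. I would then observe that uniqueness makes parallelism an equivalence relation on lines: if $\ell\parallel m$ and $m\parallel n$ while $\ell\neq n$ meet at a point $q$, then since $q\in\ell$ and $\ell\cap m=\emptyset$ we have $q\notin m$, so $\ell$ and $n$ are two distinct lines through $q$ both missing $m$, contradicting uniqueness. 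Hence each parallel class foliates $M$ by lines, and two lines from distinct classes meet in exactly one point.

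Next I would pin the total curvature to zero. Since $M$ admits total curvature and $\chi(M)=1$, the Cohn--Vossen inequality gives $c(M)=\int_M K\,dA\le 2\pi$, and the theory of open surfaces with total curvature identifies $2\pi-c(M)$ with the cone angle at infinity, equivalently $\lim_{r\to\infty}L(\partial B(p,r))/r=2\pi-c(M)$ (the geodesic circles being smooth and embedded here because there are no conjugate points). I would then rule out both strict inequalities. If $c(M)>0$ there is a cone-type angle deficit at infinity, which forces nearby geodesics to reconverge and produces conjugate points, contradicting the fact that every geodesic is a line. If $c(M)<0$ there is an angle excess at infinity of hyperbolic type, creating enough ``room at infinity'' to build a second geodesic through some point that misses a fixed line, contradicting uniqueness. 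Therefore $c(M)=0$.

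The main obstacle is the passage from $c(M)=0$ to the pointwise statement $K\equiv 0$: vanishing total curvature by itself permits non-flat metrics in which positive and negative curvature cancel, so uniqueness must be used at \emph{all} scales, not merely at infinity. Here I would localize the two mechanisms above. A region of strictly negative curvature should yield, through a Jacobi-field and Gauss--Bonnet estimate on an infinite strip bounded by two parallels and a transversal, a nondegenerate interval of directions at a point all of whose geodesics avoid a fixed line, violating uniqueness; a region of strictly positive curvature should focus a geodesic variation enough to create a conjugate point, again contradicting Step~1. The delicate part is that conjugacy and the existence of parallels are global phenomena, whereas a curvature sign is local, so the crux of the argument is to make a local curvature sign propagate to a genuine global intersection or global second parallel. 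I expect to phrase the conclusion as: every geodesic triangle has angle sum exactly $\pi$, whence $\int_T K\,dA=0$ for all triangles $T$ by Gauss--Bonnet, and hence $K\equiv 0$.

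Finally, with $K\equiv 0$ the surface $M$ is complete, simply connected and flat, so it is isometric to the Euclidean plane $\RR^2$ by the Killing--Hopf theorem, which proves the statement. I anticipate that essentially all of the real work, together with any subtlety about whether ``admits total curvature'' is being used in the strong absolutely convergent sense, will be concentrated in the third paragraph.
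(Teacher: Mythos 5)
There is a genuine gap, and you have correctly located it yourself: your third paragraph is not an argument but a description of what an argument would need to do. Passing from $c(M)=0$ (plus absence of conjugate points) to $K\equiv 0$ is precisely the hard content here, and neither of your proposed local mechanisms is available as stated. A region of strictly positive curvature on a surface of zero total curvature need not produce conjugate points, and a region of strictly negative curvature need not produce a visible ``interval of parallel directions'' at any point --- curvature of either sign can in principle be screened by cancellation elsewhere, which is exactly why vanishing total curvature alone does not imply flatness. Making a local curvature sign propagate to a global intersection statement is essentially equivalent to the rigidity theorem of Bangert--Emmerich (\cite{BE2013}, Theorem 1): for a plane without conjugate points, $\liminf_{r\to\infty}|B(p,r)|/\pi r^2\ge 1$ with equality if and only if $g$ is flat. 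That is a substantial result (an E.~Hopf-type integral-geometric argument), and the paper's proof consists of reducing to it rather than reproving it. Your plan, as written, would require you to reprove it from scratch, and the sketch in paragraph three does not do so.

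For comparison, the paper's route is: (i) use the uniqueness of parallels, the convexity of regions bounded by lines, and Shioya's construction of a line joining any two ideal points at Tits distance greater than $\pi$ to show that the ideal boundary $M(\infty)$ is forced to be a circle of length exactly $2\pi$ (any larger circle, any interval, or any disconnected boundary lets one manufacture two distinct parallels to some line through a single point); (ii) deduce from the Gromov--Hausdorff convergence of the rescaled distance circles to $M(\infty)$, together with the Hartman--Shiohama identity $\lim_t |\partial B(p,t)|/2\pi t=\lim_t|B(p,t)|/\pi t^2$, that the area growth ratio equals $1$; (iii) invoke the equality case of Bangert--Emmerich. Your step ``$c(M)=0$'' is morally the same as step (i), since $\lim_t |\partial B(p,t)|/t=2\pi-c(M)$, but your justifications for excluding $c(M)>0$ and $c(M)<0$ are heuristics (``forces geodesics to reconverge,'' ``creates room at infinity'') where the paper gives an explicit construction of a second parallel violating uniqueness. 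The cleanest repair of your proposal is to keep your first two paragraphs, replace the heuristic exclusions by the ideal-boundary construction, and replace your third paragraph entirely by a citation of the rigidity part of \cite{BE2013}.
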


%The above reference \cite{BK1991} also contains the following interesting question about Riemannian planes without conjugate points:
%
%
%\begin{conj}[\citet{BK1991}]\label{conj:foliation}
%Let $S$ be a simply connected surface with a complete Riemannian metric with no conjugate points. Suppose that $S$ is foliated by a family of geodesics, any two of which have finite Hausdorff distance. Then $S$ is flat
%\end{conj}
%
%Our main contribution to the conjecture is its proof  in the case of finite total curvature.
%
%\begin{theorem}\label{thm:foliation theorem}
%Let $M$ be a Riemannian plane with finite total curvature foliated
%by a family of geodesics, any two of which have finite Hausdorff distance. Then $M$ is flat.
%\end{theorem}

The paper is organized as follows. Section \ref{sect:ideal boundary} collects the main properties of the ideal boundary that are necessary in th rest of the article. We refer the reader to the excelent survey \cite{Shi1996} for more details on this topic. The proof of Theorem \ref{thm:Main} appears in section \ref{sect:proof theorem PA}. In Section \ref{sect:Euclid} we give a version of Theorem \ref{thm:Main} when Euclid's fifth postulate is written in its original version (see Theorem \ref{thm:Euclid original}.
%For the proof of Theorem \ref{thm:foliation theorem} we need a small extension of the area inequality appearing in \cite{BE2013}. This is done in Section \ref{sect:bangertemmerich}, and finally the proof of Theorem \ref{thm:foliation theorem} closes the paper.

 % ACKNOWLEDGEMENTS

\begin{ack}
 The authors would like to thank the Department of Mathematics of the Universidad Aut\'o\-noma de Madrid, where the present work was initiated. The third named author wishes to thank Victor Bangert for bringing this problem to his attention, as well as Wolfgang Ziller and IMPA for their hospitality during the meeting where this happened.
\end{ack}

%-----------------------------------------------------------------------------------------------
% SECTION: IDEAL BOUNDARY OF PLANES WITH FINITE TOTAL CURVATURE
%-----------------------------------------------------------------------------------------------

\section{Total curvature and ideal boundary}\label{sect:ideal boundary}
In this section we recall the definitions of total curvature and of the ideal boundary. We will be brief, recommending the article \cite{Shi1996} for further reading if necessary.

Let $K: M\to \mathbb R$ be the Gaussian curvature of a Riemannian surface $M$, and let $d\mu$ be its associated Riemannian density. Define $K^+=\max\{K, 0\}$ and $K^-=\min\{K, 0\}$. Thus we can define \emph{the total positive curvature} $c_+(M)$ and \emph{total negative curvature} $c_-(M)$ by
\[
c_+(M):=\int_M K^+d\mu,\
\quad
\ c_-(M):=\int_M K^-d\mu.
\]
We will say that the total curvature $c(M)$ of $M$ exists, or that {\em $M$ admits total curvature}, if at least one of $c_+(M)$ or $c_-(M)$ is finite, and we write in that case
$$
c(M):=c_+(M)+c_-(M).
$$
By theorems of \citet{MR1556908} and \citet{MR0224030}, the total curvature exists if and only if $c_+(M)$ is finite. Through this paper, we will assume that $M$ is homeomorphic to $\mathbb R^2$, with a Riemannian metric $g$ such that the total curvature of $(M, g)$ exists.

Under these assumptions, we can define the ideal boundary  $M(\infty)$ in several different ways \cite{Shi1996}. We choose among these the most convenient for the proof of the results mentioned in the introduction.

 Let $\Gamma(p)$ be the set of rays emanating from a fixed point $p\in M$. Denote by $d_t(\cdot\,,\,\cdot)$ the path metric on the distance sphere $S(t):=\partial B(p, t)$, i.e. $d_t(a, b)$ is the length of a shortest arc connecting $a$ to $b$ on $S(t)$.
   For two rays $\gamma$ and $\sigma$ with $p=\sigma(0)=\gamma(0)$, Shioya proved that the limit
 \begin{equation}\label{eq:Ideal=Circle}
d_\infty(\gamma(\infty), \sigma(\infty))=\lim_{t\to \infty}\frac{d_t(\gamma(t), \sigma(t))}{t}
\end{equation}
  exists. The generalized metric $d_\infty: \Gamma(p)\to [0,\infty]$ is known as the \emph{Tits metric} on $\Gamma(p)$.
 % The generalized Tits metric $d_\infty$ on $\Gamma(p)$ is defined as follows. For two rays $\gamma$ and $\sigma$ with $p=\sigma(0)=\gamma(0)$, we can define \cite[1.2.1]{Shi1996}[Shioya 1.2.1]
%\begin{equation}\label{eq:Ideal=Circle}
%d_\infty(\gamma(\infty), \sigma(\infty))=\lim_{t\to \infty}\frac{d_t(\gamma(t), \sigma(t))}{t}.
%\end{equation}
%where the metric $d_t( \cdot)$ is the path metric on the distance sphere $S(t):=\partial B(p, t)$, i.e. $d_t(a, b)$ is the shorter arc connecting $a$ to $b$ on $S(t)$.
The ideal boundary of $M$ is defined as
\[
M(\infty):=\Gamma(p)/\sim,
\]
where we identify $\gamma\sim \sigma$ if and only if $d_\infty(\gamma(\infty), \sigma(\infty))=0$. We will keep on using $d_\infty$ for the distance induced in this quotient.

Next we include some of the properties of the ideal boundary necessary for the rest of the paper.
The first one determines $M(\infty)$:
\begin{prop}[\citet{Shi1996}, 1.1.3]\label{prop:IdealBoundary}
The ideal boundary $(M(\infty), d_\infty)$ is an inner metric space, lying within one of the following cases:
\begin{enumerate}%[(\rm 1)]
\item\label{case:(1)} a single point,
\item\label{case:(2)} a closed circle with finite length,
\item\label{case:(3)} at most continuum disjoint union of closed intervals, which might also be a single point or infinity intervals.
\end{enumerate}
\end{prop}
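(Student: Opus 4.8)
The plan is to read off the structure of $M(\infty)$ from the asymptotic geometry of the distance spheres $S(t)=\partial B(p,t)$. Since $M$ is homeomorphic to $\RR^2$, each $S(t)$ is, for all but a discrete set of $t$, a topological circle; I equip it with its intrinsic length metric $d_t$. By the very definition \eqref{eq:Ideal=Circle}, $d_\infty$ is the limit of the rescaled metrics $t^{-1}d_t$, so $(M(\infty),d_\infty)$ arises as the Gromov--Hausdorff limit, with possible collapse, of the length spaces $(S(t),\,t^{-1}d_t)$. First I would verify that it is an inner metric space: the rescaled circles are length spaces, length is lower semicontinuous under this convergence, and approximate midpoints persist in the limit, so the limiting metric is intrinsic. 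The quotient by $\gamma\sim\sigma\iff d_\infty=0$ is by a closed relation compatible with the length structure, hence stays intrinsic.

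The heart of the argument is to compute the total $d_\infty$-length of $M(\infty)$. Writing $L(t)$ for the length of $S(t)$, the first variation of arc length gives $L'(t)=\int_{S(t)}\kappa_g\,ds$, while Gauss--Bonnet on the disk $B(p,t)$ yields
\[
\int_{S(t)}\kappa_g\,ds = 2\pi\chi\big(B(p,t)\big)-\int_{B(p,t)}K\,d\mu = 2\pi-\int_{B(p,t)}K\,d\mu.
\]
Because the total curvature exists, $\int_{B(p,t)}K\,d\mu\to c(M)$, so $L'(t)\to 2\pi-c(M)$, and a mean-value (Ces\`aro) argument gives
\[
\lim_{t\to\infty}\frac{L(t)}{t}=2\pi-c(M),
\]
which by the Cohn--Vossen inequality $c(M)\le 2\pi$ is a nonnegative, possibly infinite, number equal to the total length of $M(\infty)$.

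The trichotomy then corresponds to the value of $2\pi-c(M)$. If $c(M)=2\pi$, the total length is $0$ and $M(\infty)$ collapses to a single point, giving case \eqref{case:(1)}. If $-\infty<c(M)<2\pi$, the total length is finite and positive; since then $c_-(M)>-\infty$ no direction can carry infinite Tits distance, so the rescaled circles converge to a single closed curve, a circle of circumference $2\pi-c(M)$, giving case \eqref{case:(2)}. If $c(M)=-\infty$, the total length is infinite: here $d_\infty$ may equal $+\infty$ between rays straddling directions into which the negative curvature escapes, so $M(\infty)$ splits into connected components mutually at infinite Tits distance. Each component is a one-dimensional, locally circular length space of finite length, hence a closed interval or a point, and since $M(\infty)$ is a quotient of $\Gamma(p)\cong\sphere^1$ there are at most continuum many, giving case \eqref{case:(3)}.

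The main obstacle is the passage from this clean smooth computation to the genuine, non-smooth distance spheres. The cut locus of $p$ makes $S(t)$ only piecewise smooth, so both $L'(t)=\int_{S(t)}\kappa_g\,ds$ and the Gauss--Bonnet bookkeeping must be replaced by the theory of the total curvature of geodesic circles (Fiala, Hartman): one tracks the nonnegative corner contributions produced by the cut locus and shows they do not perturb the limit $L(t)/t\to 2\pi-c(M)$. The second delicate point is case \eqref{case:(3)}, where proving that infinite total length forces a disconnection into intervals, rather than a single circle of infinite length, requires locating exactly the directions at which $d_\infty=\infty$; this is the technical core and the part I expect to be hardest to make rigorous.
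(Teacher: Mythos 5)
First, a point of reference: the paper does not prove this proposition at all --- it is imported verbatim from Shioya \cite{Shi1996} (Theorem 1.1.3), so there is no in-paper argument to compare yours against. Your outline does follow the standard route to that theorem: the Fiala--Hartman asymptotics $L(t)/t\to 2\pi-c(M)$ obtained from first variation plus Gauss--Bonnet, combined with the identification of $(M(\infty),d_\infty)$ as the limit of the rescaled circles $(S(t),t^{-1}d_t)$; and the correspondence you set up (a point iff $c(M)=2\pi$, a circle iff $c(M)$ is finite and less than $2\pi$, intervals iff $c(M)=-\infty$) is the right picture. Your argument that finite $c(M)$ forces all Tits distances to be finite, via $d_t\le L(t)/2$, is also correct.

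However, the two steps you flag as ``delicate'' are in fact the entire content of the theorem, and as written they are gaps rather than details. (i) The identity $L'(t)=\int_{S(t)}\kappa_g\,ds$ fails across the cut locus of $p$, where $S(t)$ is not a smooth circle and $L(t)$ is not even differentiable; making $L(t)/t\to 2\pi-c(M)$ rigorous is precisely the content of \cite{Har1964} and \cite{Shio1985}, and without invoking those results your length computation is purely formal. (ii) More seriously, in the case $c(M)=-\infty$ you assert but do not prove that infinite total length forces $M(\infty)$ to disconnect into intervals rather than remain a single circle of infinite length; one must exhibit two directions for which \emph{both} complementary arcs have infinite limit length, which amounts to analyzing where the ``angular measure at infinity'' concentrates. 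This is the technical core of Shioya's proof and is absent from your sketch. Finally, your claim that each component of $M(\infty)$ has finite length contradicts the statement itself, which explicitly allows intervals of infinite length; the correct conclusion is only that each component, being a quotient of a subarc of $\Gamma(p)\cong S^1$, is a possibly degenerate, possibly unbounded closed interval. Given that the paper simply cites Shioya here, the appropriate repair is either to cite the same sources for (i) and (ii) or to carry out Shioya's argument in full.
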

For a {\it line} $\gamma: (-\infty, \infty)\to M$, we call $\gamma^+(t)=\gamma(t)$ and $\gamma^-(t)=\gamma(-t)$ the positive and negative rays associated to the line $\gamma$, and we denote by $\gamma(-\infty)$ (resp. $\gamma(\infty)$ ) the associated ideal boundary point of the ray $\gamma^-$ (resp. $\gamma^+$). If $\gamma, \sigma$ are two rays, we will say that \emph{$\gamma$ is asymptotic to $\sigma$} if the geodesic segments $\overline{\gamma(0)\sigma(t)}$ converge to $\gamma$;
%It is easy to show that this is an equivalence condition and
This will be denoted as $\sigma\sim\gamma$.

When one ray is asymptotic to  another, we cannot tell them apart at infinity; in fact by \cite[Theorem 1.1.2]{Shi1996},  we have $d_{\infty}(\sigma(\infty), \gamma(\infty))=0$ if $\sigma\sim\gamma$.

We also need the following
\begin{prop}[\citet{Shi1996}, Theorem 1.1.5]\label{prop:PropOfLine}
For any line $\gamma$, we have $$d_\infty(\gamma(-\infty), \gamma(\infty))\ge \pi.$$
Moreover, for any $x, y\in M(\infty)$, with $d_\infty(x, y)> \pi$, there exists a line $\sigma$ with $\sigma(-\infty)=x$ and $\sigma(\infty)=y$.
\end{prop}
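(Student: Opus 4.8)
The plan is to treat the two assertions separately, in both cases combining the Gauss--Bonnet theorem with the fact that, having total curvature, $M$ is asymptotic to a flat metric cone whose cone angle is $2\pi-c(M)$, the Tits metric on $M(\infty)$ being exactly the metric this cone induces on its circle at infinity.

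For the lower bound, let $\gamma$ be a line and put $p=\gamma(0)$, so that $\gamma^+$ and $\gamma^-$ are opposite rays making angle $\pi$ at $p$. The line separates $M$ into two closed half-planes $H_1,H_2$, and the points $\gamma(\pm\infty)$ cut $M(\infty)$ into two arcs, one facing each $H_i$. First I would compute the Tits length of these arcs. Applying Gauss--Bonnet to the truncated region $H_i\cap B(p,t)$, whose boundary consists of the two radial geodesic segments $\gamma^\pm|_{[0,t]}$ and an arc of $S(t)$ meeting them orthogonally by the Gauss lemma, the two right-angle corners and the straight angle at $p$ contribute total exterior angle $\pi$, so that $\int_{\mathrm{arc}}k_g\,ds=\pi-\int_{H_i\cap B(p,t)}K\,d\mu$. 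Letting $t\to\infty$ and identifying this limiting total geodesic curvature with the Tits length of the corresponding ideal arc (a point one must justify, using that the metric flattens at infinity) yields the clean identity that the Tits length of the arc facing $H_i$ equals $\pi-\int_{H_i}K\,d\mu$. The inequality $d_\infty(\gamma(-\infty),\gamma(\infty))\ge\pi$ is then equivalent to the assertion that $\int_{H_i}K\,d\mu\le 0$ for the side realizing the shorter arc.

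The hard part is precisely this last step, where the hypothesis that $\gamma$ is a \emph{line} --- not merely a geodesic without conjugate points --- must finally be used. I expect to argue by contradiction: if the shorter ideal arc had length $<\pi$, then for large $s$ the endpoints $\gamma(\pm s)$, which lie at distance $2s$ along $\gamma$, could be joined by a strictly shorter path obtained by developing the asymptotically conical region between the two rays and taking the chord, contradicting $\dist(\gamma(-s),\gamma(s))=2s$. Carrying this shortcut out at the sharp threshold (the naive path along $S(s)$ only wins when the curvature is large) is the main obstacle and the technical heart of the argument; it is where the asymptotic-cone comparison does the real work.

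For the second assertion, suppose $d_\infty(x,y)>\pi$ and fix rays $\alpha,\beta$ with $\alpha(\infty)=x$, $\beta(\infty)=y$. I would let $\sigma_n$ be a minimizing geodesic from $\alpha(n)$ to $\beta(n)$ and extract a limit. The strict inequality $d_\infty(x,y)>\pi$, contrasted with the bound $\ge\pi$ for the ends of any line just established, is exactly the room needed to prevent the $\sigma_n$ from escaping to infinity: it forces them to enter and remain within a fixed compact set, so that by Arzel\`a--Ascoli a subsequence converges to a complete geodesic $\sigma$. Being a limit of minimizing segments, $\sigma$ is itself minimizing, hence a line; the remaining task is to check that its ends are asymptotic to $\alpha$ and $\beta$, i.e. that $\sigma(-\infty)=x$ and $\sigma(\infty)=y$, which follows by comparing ideal distances and using that $x\ne y$ are more than $\pi$ apart so the two ends cannot collapse to a single ideal point. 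The main obstacle here is the non-escape and compactness step, which again rests on the conical picture of the ends.
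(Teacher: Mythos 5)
First, a point of comparison: the paper does not prove this proposition at all --- it is imported verbatim from Shioya's survey (Theorem 1.1.5 there), and the only proof-related material in the text is the brief recollection of the construction of $\sigma$ as a limit of chords $\overline{\alpha(t_n)\beta(t_n)}$, which coincides with your plan for the second assertion. So the real question is whether your sketch amounts to a proof, and it does not yet: at both of the places you yourself flag as ``the technical heart,'' the argument is missing, and those are not routine verifications.

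For the first assertion, once your Gauss--Bonnet computation reduces the inequality to showing $\int_{H_i}K\,d\mu\le 0$ for the half-plane realizing the shorter ideal arc, you have reduced the statement to something essentially equivalent to it: the non-positivity of the total curvature of a half-plane bounded by a line is exactly the Cohn--Vossen-type lemma that drives Shioya's proof, and your proposed shortcut (``develop the asymptotically conical region and take the chord'') is only a heuristic. In particular, the asymptotically-flat-cone picture you lean on is unavailable when $c(M)=-\infty$ or when $M(\infty)$ is a disjoint union of intervals (case (3) of Proposition \ref{prop:IdealBoundary}); the identification of $\lim_{t}\int_{c_t}k_g\,ds$ with the Tits length of the ideal arc is itself a theorem requiring the first-variation identity $\tfrac{d}{dt}L(c_t)=\int_{c_t}k_g\,ds$ and an averaging argument, not merely ``the metric flattens at infinity''; and the proposition is stated without assuming absence of conjugate points, so $S(t)$ need not be smooth and the Gauss-lemma/orthogonality step needs care. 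For the second assertion, the crux --- that the chords $\sigma_n$ do not escape to infinity --- is not established by ``contrasting'' $d_\infty(x,y)>\pi$ with the $\ge\pi$ bound for lines, since that bound applies to the limiting line, which is precisely what you do not yet have; the actual mechanism is again Gauss--Bonnet applied to the regions cut off by the chords, showing that if the $\sigma_n$ left every compact set then the first variation of $t\mapsto d(\alpha(t),\beta(t))$ together with the resulting angle sum would force $d_\infty(x,y)\le\pi$. As written, both halves of your argument stop exactly where the real work begins.
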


In the paper, we will also need the construction of the line $\sigma$ in Shioya's paper, so we recall briefly its construction. Let $\alpha$ and $\beta$ be two rays such that $\alpha(\infty)=x$ and $\beta(\infty)=y$. Pick $t_{n}\to\infty$ as $n\to \infty$, consider the geodesic segments $\{\gamma_{n}=\overline{\alpha(t_{n})\beta(t_{n})}\}$. By letting $n\to \infty$, $\gamma_{n}$ will converge to a line $\sigma$ when $d_{\infty}(x,y)>\pi$ by the Gauss-Bonnet formula.

%----------------------------------------------------
% SECTION: PROOF OF THEOREM FOR PLANES WITH (E)+(PA)
%----------------------------------------------------
\section{Proof of Theorem 1.1}\label{sect:proof theorem PA}
Under the assumption of the Parallel Axiom, it is easy to see that $M$ has no conjugate points (see \citet[Proposition 1.7]{BK1991}), hence every geodesic can be extended to a line and every two non identical geodesics cannot intersect in more than one point. We will need the following easy yet very useful observation whose proof is trivial.

\begin{lemma}[Convexity Lemma]\label{lem:convex}
Any line $\gamma$ separates $M$ into two totally convex sets. Hence if $\Omega$ is a locally convex domain bounded by finitely many geodesics, then $\Omega$ is convex.
\end{lemma}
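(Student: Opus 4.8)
The plan is to deduce everything from the two consequences of the Parallel Axiom already recorded: every geodesic extends to a globally minimizing line, and two distinct geodesics meet in at most one point. In fact, since $M$ is complete, simply connected and free of conjugate points, $\exp_p$ is a diffeomorphism, so there is a \emph{unique} geodesic segment between any two points; I would record this at the outset. I would also isolate the remark that, because a geodesic is determined by a point and a direction, two distinct geodesics sharing a point must cross transversally there (a tangency would force them to coincide). Thus every intersection of two distinct geodesics is a genuine crossing that switches sides.

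For the first assertion, note that a line $\gamma$ is injective (immediate from $\dist(\gamma(s),\gamma(t))=|s-t|$) and proper, hence a properly embedded copy of $\RR$ in $M\cong\RR^2$; by Jordan--Brouwer separation $M\setminus\gamma(\RR)$ has exactly two components $H^{+}$ and $H^{-}$. To see each is totally convex, take $p,q$ in one of them, say $H^{+}$, and let $\sigma$ be the geodesic joining them. Since $p,q\notin\gamma(\RR)$ we have $\sigma\neq\gamma$, so $\sigma$ meets $\gamma$ in at most one point, and by the preliminary remark any such meeting is a transversal crossing. A path joining two points of $H^{+}$ must cross $\gamma$ an even number of times; with at most one crossing available this number is $0$, so $\sigma\subseteq H^{+}$. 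By symmetry both components are totally convex.

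For the second assertion I would extend the boundary edges of $\Omega$ to full lines $\gamma_{1},\dots,\gamma_{n}$ and let $H_{i}$ be the closed half-plane bounded by $\gamma_{i}$ on the side into which $\Omega$ opens along that edge; local convexity (interior angles $\le\pi$) gives $\Omega\subseteq H_{i}$ in a neighborhood of the $i$-th edge. The crucial step is to promote this to the \emph{global} inclusion $\Omega\subseteq H_{i}$ for every $i$. Granting it, $\overline{\Omega}\subseteq\bigcap_{i}H_{i}$, which is totally convex as an intersection of the totally convex half-planes of the first part; and for $p,q\in\Omega$ the geodesic $\sigma$ between them lies in each $H_{i}$, so it meets each $\gamma_{i}$ at most the single permitted time. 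Any excursion of $\sigma$ out of $\Omega$ would have to leave and re-enter through the bounding lines; if it exited and re-entered across the same $\gamma_{i}$ it would meet $\gamma_{i}$ in two distinct points, which is impossible, and the even-crossing count as in the first part then forces $\sigma\subseteq\overline{\Omega}$, whence $\sigma\subseteq\Omega$ by an open-condition argument at the endpoints.

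The main obstacle is exactly the global one-sidedness $\Omega\subseteq H_{i}$: I must rule out that $\Omega$, after lying on the $H_{i}$-side near its edge on $\gamma_{i}$, curves back and pokes across $\gamma_{i}$ elsewhere. Here I would analyze the chords cut out of $\overline{\Omega}$ by the line $\gamma_{i}$. Each such chord disjoint from the edge $e_{i}$ has its two endpoints on two \emph{other} edges $e_{k},e_{l}$, i.e.\ at the unique crossings of $\gamma_{i}$ with $\gamma_{k}$ and $\gamma_{l}$; since the arrangement is finite, $\gamma_{i}$ can cross $\partial\Omega$ only finitely often, and local convexity at the endpoints of any such chord is incompatible with the prescribed interior angles, yielding a contradiction. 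Once $\Omega\subseteq H_{i}$ is secured for all $i$, the remaining steps are the routine even-crossing bookkeeping already used above.
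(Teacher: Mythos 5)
The paper offers no argument to compare against: it declares the lemma ``trivial'' and moves on. Judged on its own merits, the first half of your proposal is correct and complete. The preliminary observations (no conjugate points $\Rightarrow$ $\exp_p$ is a diffeomorphism, hence unique geodesics between points, every geodesic a line, two distinct geodesics meeting at most once and only transversally) are exactly the right inputs, and the Jordan--Brouwer separation plus the mod-$2$ crossing count gives total convexity of the two sides of a line cleanly.

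The second half has a genuine gap, and it sits precisely at the step you yourself flag as the main obstacle: the global inclusion $\Omega\subseteq H_i$. Your proposed justification --- that ``local convexity at the endpoints of any such chord is incompatible with the prescribed interior angles'' --- does not work, because the endpoints of a chord of $\gamma_i\cap\overline{\Omega}$ are in general \emph{interior} points of other edges $e_k$, not vertices of $\Omega$; at such a point the interior angle is $\pi$, $\gamma_i$ simply crosses $\gamma_k$ transversally, and the local picture ($\gamma_i$ passing from the complement of $\Omega$ into $\Omega$ across the edge $e_k$) is perfectly consistent with local convexity. No purely local analysis at chord endpoints can produce the contradiction; the obstruction to $\Omega$ ``curving back across $\gamma_i$'' is global. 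Two ways to repair this: (i) drop the half-plane detour and run the standard Tietze--Nakajima open-and-closed argument on $A=\{q\in\Omega:\overline{pq}\subset\Omega\}$, using local convexity for openness and the tangency-implies-coincidence remark to handle the closedness step (a limit segment touching $\partial\Omega$ at an interior point would have to coincide with a bounding geodesic); or (ii) note that the only domains the paper actually uses are sectors between two rays from a common vertex and half-strips, for which the identification $\overline{\Omega}=\bigcap_i H_i$ can be verified directly from the fact that each pair of bounding lines meets at most once (so the complement of their union has the expected component structure). Once $\Omega\subseteq H_i$ is actually established for all $i$, your concluding bookkeeping can also be shortened: $\mathrm{int}\bigl(\bigcap_i H_i\bigr)$ is a connected open set disjoint from $\partial\Omega$ and meeting $\Omega$, hence equals $\Omega$, and convexity follows at once from the first part.
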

\begin{prop}\label{prop:biAsymOfLine}
Let $\sigma$ and $\gamma$ be two lines parallel to each other. Then $\sigma$ and $\gamma$ are also bi-asymptotic to each other.  That is that, after a possible change of the variable $t\to -t$ for one of the two lines, both $\sigma^+\sim \gamma^+$ and $\sigma^-\sim \gamma^-$ hold.
\end{prop}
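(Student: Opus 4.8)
The plan is to exhibit each parallel line as \emph{the} unique parallel guaranteed by the Parallel Axiom, using co-rays (asymptotic rays) to $\sigma$ together with the total convexity supplied by Lemma~\ref{lem:convex}. Fix $p=\gamma(0)$. Since $\gamma$ and $\sigma$ are disjoint, each of them separates $M$ into two totally convex halves: write $M\setminus\sigma = H_1\sqcup H_2$ with $\gamma\subset H_1$, and $M\setminus\gamma = A\sqcup B$ with $\sigma\subset A$. One checks that $B\subset H_1$ and that $S:=\overline{H_1}\cap\overline{A}$ is the convex strip with $\partial S=\sigma\cup\gamma$. I will work with the co-ray $c^+$ to $\sigma^+$ emanating from $p$, i.e. a subsequential limit of the minimizing segments $\overline{p\,\sigma(t_n)}$; it exists by Arzel\`a--Ascoli and is asymptotic to $\sigma^+$ by the very definition of $\sim$. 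Symmetrically I take the co-ray $c^-$ to $\sigma^-$.

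First I would pin down the location of $c^+$. Each segment $\overline{p\,\sigma(t_n)}$ joins two points of $\overline{A}$ (and of $\overline{H_1}$), so by total convexity it lies in $\overline{A}$ (and in $\overline{H_1}$); passing to the limit, $c^+\subset \overline{A}\cap\overline{H_1}=\overline{S}$. In particular $c^+$ cannot cross $\sigma$: it stays in $\overline{H_1}$, so a transversal crossing would push it into $H_2$, while a tangential contact would force $c^+=\sigma$, impossible since $p\notin\sigma$.

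The crux is to show that $c^+$ is tangent to $\gamma$ at $p$, that is $c^+\in\{\gamma^+,\gamma^-\}$. Suppose instead that $c^+$ leaves $p$ transversally to $\gamma$; since $c^+\subset\overline{S}$, it then enters the interior of $S$, so its backward extension enters $B$. Because a geodesic meets the line $\gamma$ at most once, this backward ray can never recross $\gamma$, and hence remains in the convex set $\overline{B}$, which is disjoint from $\sigma$. Together with the forward ray $c^+\subset\overline{S}$, which we saw avoids $\sigma$, this exhibits the full geodesic extension $L$ of $c^+$ as a line through $p$ disjoint from $\sigma$, i.e. a parallel to $\sigma$. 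By the uniqueness clause of the Parallel Axiom, $L=\gamma$; but $L$ was transversal to $\gamma$ at $p$, which is absurd. Hence $c^+\in\{\gamma^+,\gamma^-\}$, and likewise $c^-\in\{\gamma^+,\gamma^-\}$.

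It remains to rule out $c^+=c^-$: this would give $\sigma^+\sim\gamma^{\pm}\sim\sigma^-$, whence $d_\infty(\sigma(\infty),\sigma(-\infty))=0$, contradicting $d_\infty(\sigma(-\infty),\sigma(\infty))\ge\pi$ from Proposition~\ref{prop:PropOfLine}. Thus $\{c^+,c^-\}=\{\gamma^+,\gamma^-\}$, which says precisely that $\gamma$ and $\sigma$ are bi-asymptotic after possibly replacing $t$ by $-t$ on $\gamma$. The main obstacle is the crux step: one must control not only the forward co-ray (confined to the strip by convexity relative to both lines) but also its backward extension, the key point being that total convexity of the half of $M$ bounded by $\gamma$ and facing away from $\sigma$ keeps that backward ray off $\sigma$, so that uniqueness of parallels applies. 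The careful bookkeeping of which convex half contains $\sigma$, $\gamma$, and $c^{\pm}$ is where the attention is required.
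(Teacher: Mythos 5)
Your proof is correct and follows essentially the same strategy as the paper's: trap the co-ray between the two parallels inside the convex region they bound (Lemma~\ref{lem:convex}), extend it to a full geodesic that avoids the other parallel, and invoke the uniqueness clause of the Parallel Axiom to identify it with the given line. The only differences are cosmetic --- you base the co-rays at $\gamma(0)$ rather than $\sigma(0)$ and argue by contradiction, adding an explicit appeal to $d_\infty(\sigma(-\infty),\sigma(\infty))\ge\pi$ to separate $c^+$ from $c^-$, a point the paper leaves implicit.
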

\begin{proof}
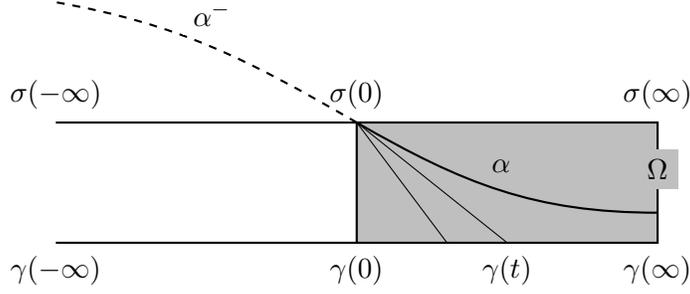
\begin{figure}
\begin{tikzpicture}[scale=4]

\draw [thick](-1, 0.4)node[above=1pt, fill=white]{$\sigma(-\infty)$} -- (0,0.4)node[above=1pt, fill=white]{$\sigma(0)$} -- (1, 0.4)node[above=1pt, fill=white]{$\sigma(\infty)$};

\draw [thick](-1, 0)node[below=1pt, fill=white]{$\gamma(-\infty)$} -- (0,0)node[below=1pt, fill=white]{$\gamma(0)$} -- (1, 0)node[below=1pt, fill=white]{$\gamma(\infty)$};

\draw [thick, draw=black, fill=lightgray](0,0) -- (0,0.4) -- (1,0.4)node[below=10pt, fill=lightgray]{$\Omega$}--(1,0)-- (0,0);
\draw (0,0.4)--(0.5, 0) node[below=1pt](1){$\gamma(t)$};
\draw (0,0.4)--(0.3, 0);
\draw [thick](0,0.4) to[out=-30, in=180] node[above=3pt, fill=lightgray](2){$\alpha$}(1, 0.1);
%\path (1) edge node [right] {} (2);
\draw [thick, dashed](0,0.4) to[out=150, in=350] node[above=4pt, fill=white]{$\alpha^{-}$}(-1, 0.8);
\end{tikzpicture}
\caption{Bi-asymptoticity of lines}
\label{FIG:biasym}
\end{figure}
Denote by $\Omega$ the region bounded by the rays $\sigma^{+}$, $\gamma^{+}$ and the geodesic passing trough $\gamma(0)\sigma(0)$, see \ref{FIG:biasym}. By \ref{lem:convex}, $\Omega$ is convex. Suppose the segments $\overline{\sigma(0)\gamma(t)}$ converges to a ray $\alpha$ as $t\to +\infty$. By the convexity of $\Omega$, we know that $\overline{\sigma(0)\gamma(t)}\subset \Omega$, hence $\alpha\subset \Omega$. Hence the angle $\theta$, between $\alpha$ and $\sigma^{+}$, satisfies $0\le \theta <\pi/2$. Then we can extend $\alpha$ by the ray $\alpha^{-}$ to a line, which we still denote by $\alpha$. Since $\theta\ge 0$, the ray $\alpha^{-}$ does not intersect $\gamma$. By the uniqueness of the parallel line through $\sigma(0)$ of the line $\gamma$, we see that $\alpha(t)=\sigma(t)$ for any $t$. The other asymptotic can be shown similarly.
\end{proof}
The previous result is mentioned without a proof by \citet{MR0075623}. Now we are ready to state and proof our key observation.

\begin{lemma}[Key Lemma]\label{lem:DiamEst}
The ideal boundary $M(\infty)$ is a closed circle with length $2\pi$.
\end{lemma}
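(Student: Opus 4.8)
The plan is to take Shioya's trichotomy (Proposition \ref{prop:IdealBoundary}) and eliminate possibilities until only a circle survives, and then pin the length to $2\pi$ by two one-sided estimates. Throughout I would lean on two facts already in place: every complete geodesic is a line, so from a fixed base point $p$ every direction spans a ray and \emph{every} class in $M(\infty)$ is represented by a ray from $p$; and, by the Convexity Lemma \ref{lem:convex}, each line $\ell$ splits $M$ into two totally convex half-planes $H_\pm$. This separation is the main engine. If $\ell$ has ideal endpoints $a,b$, then $M(\infty)=\partial_\infty H_+\cup\partial_\infty H_-$, the two pieces meeting exactly in $\{a,b\}$; consequently two distinct lines are disjoint precisely when their endpoint pairs are unlinked and cross in a single point precisely when their endpoint pairs are linked (here I use that two distinct geodesics meet at most once, and transversally).

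First I would discard the degenerate cases of Proposition \ref{prop:IdealBoundary}. Case (1) is impossible because any line $\gamma$ supplies, via Proposition \ref{prop:PropOfLine}, two ideal points with $d_\infty(\gamma(-\infty),\gamma(\infty))\ge\pi>0$. For case (3), the separation above exhibits $M(\infty)$ as the union of the two connected pieces $\partial_\infty H_\pm$ overlapping in $\{a,b\}$, hence connected; this excludes any genuine disjoint union and leaves only a single closed interval as an alternative to the circle. An interval $I$ is then ruled out as follows. Its length is $\ge\pi$, since it must contain the endpoints of a line. If the length exceeds $\pi$, choose interior points $u,w\in\mathrm{int}(I)$ with $d_\infty(u,w)>\pi$ and use Proposition \ref{prop:PropOfLine} to span them by a line with both endpoints interior; then $\partial_\infty H_\pm$ would be two subintervals covering $I$ but meeting only in the two interior points $\{u,w\}$, impossible as two subintervals meet in a subinterval. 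If the length equals $\pi$, then the extremities of $I$ are the only pair at distance $\ge\pi$, so every line has its endpoints there and no ray from $p$ represents an interior point, contradicting that the rays from $p$ exhaust $M(\infty)$. Hence $M(\infty)$ is a closed circle of some finite length $L$, with $d_\infty$ its shorter-arc metric.

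The lower bound $L\ge 2\pi$ is immediate: for any line the two ideal endpoints are at shorter-arc distance $\ge\pi$ by Proposition \ref{prop:PropOfLine}, so both complementary arcs have length $\ge\pi$ and $L\ge 2\pi$. The upper bound $L\le 2\pi$ is where the Parallel Axiom does the real work, and I expect it to be the crux. Suppose $L>2\pi$. Fix a line $\gamma$ with endpoints $x,y$; the longer arc $A$ between them then has length $>\pi$ (indeed $\ge L/2$). Inside the open arc $A$ I would choose four points in interlaced cyclic order whose two chords each have shorter-arc length strictly between $\pi$ and $L/2$, which fits because $|A|>\pi$. By Proposition \ref{prop:PropOfLine} these chords are spanned by lines $\ell_1,\ell_2$. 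Since all four endpoints lie in the open arc $A$, neither pair is linked with $\{x,y\}$, so both $\ell_1,\ell_2$ are disjoint from $\gamma$; since the two pairs are linked with each other, $\ell_1$ and $\ell_2$ cross at a single point $p$. Thus $\ell_1,\ell_2$ are two distinct lines through $p$, neither meeting $\gamma$, contradicting the uniqueness clause of the Parallel Axiom. Therefore $L\le 2\pi$, and with the lower bound $L=2\pi$.

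The main obstacle, as flagged, is the upper bound: morally it records that the uniqueness of parallels cannot coexist with the ``hyperbolic'' excess length that $L>2\pi$ would force (in the hyperbolic plane, where the ideal boundary is long, parallels are wildly non-unique). The delicate points to get right are the dictionary ``linked endpoints $\Leftrightarrow$ crossing lines,'' which rests on the Convexity Lemma \ref{lem:convex} and on two geodesics meeting at most once, and the elementary arithmetic confirming that interlaced chords of shorter-arc length $>\pi$ genuinely fit inside $A$ once $|A|>\pi$. A secondary technical point is the exclusion of the interval case, for which the connectedness of $\partial_\infty H_\pm$ (and hence of $M(\infty)$) must be justified from the convexity of the half-planes; the bi-asymptoticity of parallels (Proposition \ref{prop:biAsymOfLine}) is the natural tool to keep the endpoint bookkeeping consistent there.
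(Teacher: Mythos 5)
Your overall strategy --- run through Shioya's trichotomy, get the lower bound $L\ge 2\pi$ from Proposition \ref{prop:PropOfLine}, and derive the upper bound by manufacturing a violation of the uniqueness of parallels --- is the right one and matches the paper in spirit. But the load-bearing step, the dictionary ``linked endpoint pairs $\Leftrightarrow$ crossing lines,'' is asserted rather than proved, and it does not follow from the Convexity Lemma alone. What Lemma \ref{lem:convex} gives you is that a line disjoint from $\ell_1$ lies in one closed half-plane $H$ of $\ell_1$ and hence has both ideal endpoints in $\partial_\infty H$. To upgrade this to ``unlinked on the circle'' you need that $\partial_\infty H_+$ and $\partial_\infty H_-$ are exactly the two closed arcs of $M(\infty)$ cut out by the endpoints of $\ell_1$: that each is connected, that they meet only in those two endpoints, and that the cyclic order on $M(\infty)$ is compatible with separation by lines. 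None of this is established, and it is not automatic --- the Tits metric is not the cone topology, and $d_\infty$ can a priori be infinite between ideal points of rays lying on the same side of a line. You flag this as ``delicate'' but never close it. The same unproved structure is what your exclusion of case (3) rests on: the components in Proposition \ref{prop:IdealBoundary}(\ref{case:(3)}) are separated by infinite Tits distance, so the connectedness of $\partial_\infty H_\pm$ is precisely the point in question, not something you may quote; your interval-of-length-greater-than-$\pi$ argument leans on the identical claim. (The length-exactly-$\pi$ subcase and the lower bound $L\ge2\pi$ are fine.)

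The paper avoids all of this by arranging the contradiction at a point of the original line: given $\gamma$ and a suitably chosen ray $\alpha$ from $\gamma(0)$, it builds the third line $\sigma$ by Shioya's limit-of-chords construction inside the sector $\Delta$ bounded by $\gamma^-$ and $\alpha$; since the approximating chords have endpoints in $\Delta$, convexity of $\Delta$ (Lemma \ref{lem:convex}) alone forces $\sigma\subset\Delta$, so $\sigma$ misses both $\gamma$ and the line extending $\alpha$, and $\gamma$ and that extension are then two parallels to $\sigma$ through $\gamma(0)$. The same three-line sector trick disposes of the long-circle, disconnected, and interval cases uniformly, with no appeal to the arc structure of $\partial_\infty H_\pm$. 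To repair your version you would either have to prove that arc structure (essentially re-deriving a chunk of Shioya's theory) or replace the linking argument by this sector construction; as written, the upper bound $L\le 2\pi$ and the reduction to the circle case both have genuine gaps.
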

\begin{proof}
Clearly the existence of a line implies the ideal boundary is not a point by Proposition \ref{prop:PropOfLine}. In fact this also implies that $\Diam(M(\infty))\ge \pi$. Hence in the list of Proposition \ref{prop:IdealBoundary}, only (\ref{case:(2)}) and (\ref{case:(3)}) could happen.

\noindent\textit{Case 1:} Suppose $M(\infty)$ is a closed circle of length $2\ell$, with $\ell >\pi$. For any line $\gamma$, we can choose a point $x\in M(\infty)$ such that $d_\infty(\gamma(-\infty), x)>\pi$ with $x\ne \gamma(\infty)$. Let $\alpha$ be a ray such that $\alpha(0)=\gamma(0)$ and $\alpha(\infty)=x$. Denote by $\Delta$ the convex sector in $M$ bounded by the rays $\gamma^{-}$ and $\alpha$ (see  \ref{FIG:tripod}). By Lemma \ref{lem:convex}, $\Delta$ is totally convex. Hence by the construction in the proof of Proposition \ref{prop:PropOfLine}, there exists a line $\sigma$ connecting $\gamma(-\infty)$ to $\alpha(\infty)$, such that
\[
\sigma(\mathbb R)\subset \Delta,
\]
by convexity of $\Delta$. By the definition, $\sigma$ is parallel to $\gamma$ and does not intersect $\alpha$. Clearly the convexity of $\Delta$ implies that the line $\alpha$ extending the ray $\alpha$, does not intersect $\sigma$. Hence we have two lines, $\alpha$ and $\gamma$, passing through $\gamma(0)$, and parallel to $\sigma$. This is a  contradiction to the Parallel Axiom. Thus if $M(\infty)$ is a circle, it necessarily has length $2\pi$.

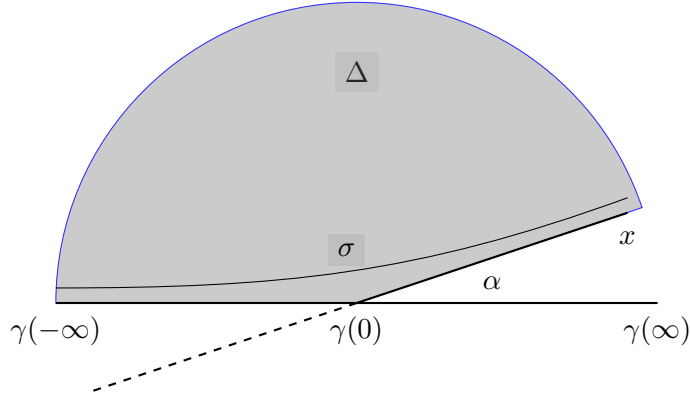
\begin{figure}
\begin{tikzpicture}[scale=4]
\filldraw[fill=lightgray, draw=blue, opacity=0.8] (0,0) -- (-1, 0)arc (180:18.5:1) -- (0,0)node[above=80pt, fill=lightgray]{$\Delta$};
\draw [thick](-1, 0)node[below=1pt, fill=white]{$\gamma(-\infty)$} -- (0,0)node[below=1pt, fill=white]{$\gamma(0)$} -- (1, 0)node[below=1pt, fill=white]{$\gamma(\infty)$};
\draw [thick](0, 0) -- node[below=3pt, fill=white]{$\alpha$}(0.9, 0.3)node[below=3pt, fill=white]{$x$};
\draw (-1,0.05) to[out=0, in=200] node[above=2pt, fill=lightgray]{$\sigma$}(0.9, 0.35);
\draw [thick, dashed](0,0) -- (-0.9, -0.3);

\end{tikzpicture}
\caption{The construction}
\label{FIG:tripod}
\end{figure}

\noindent\textit{Case 2:} Suppose that $M(\infty)$ contains  at least two connected components. Choose $z_1$, $z_2$, points in $M(\infty)$ lying in different components, hence $d_{\infty}(z_{1}, z_{2})=\infty$. By Proposition \ref{prop:PropOfLine} there is a line $\gamma$ connecting $z_{1}$ to $z_{2}$. Let $\alpha$ be a ray emanating from $\gamma(0)$ but that does not coincide with either $\gamma^{+}$ or $\gamma^{-}$. Since
$$\infty=d_{\infty}(\gamma(-\infty), \gamma(\infty))\le d_{\infty}(\gamma(-\infty), \alpha(\infty))+d_{\infty}(\alpha(\infty), \gamma(\infty)),$$
we can assume $d_{\infty}(\gamma(-\infty), \alpha(\infty))=\infty$. By the same argument as in Case 1 above, there exists a line $\sigma$ constructed from the limit of line segments $\overline{\gamma(-t)\alpha(t)}$,  hence $\sigma$ does not intersect either $\gamma$ or $\alpha$; which contradicts again the uniqueness of parallels through the point $\gamma(0)$.

\noindent\textit{Case 3:} Suppose $M(\infty)$ is a connected interval of length $\ell$. Since for any line $\gamma$, we have $d_{\infty}(\gamma(-\infty), \gamma(\infty))\geq \pi$, it implies that the radius of $M(\infty)$ satisfies
\[
\Rad(M(\infty))\ge \pi.
\]
Hence $\ell\ge 2\pi$. We denote $M(\infty)$ by $[0, \ell]$. Choose $z_{1}=0$, $z_{2}=1.5\pi$. Let $\gamma$ be a line connecting $z_{1}$ to $z_{2}$. Let $\alpha$ be a ray from $\gamma(0)$ to $z_{3}=1.2\pi\in M(\infty)$. Again, by the same construction as in Case 1, we get a line $\sigma$ from $z_{1}$ to $z_{3}$ that does not intersect $\gamma$ and $\alpha$, which contradicts once again the uniqueness of parallels.

Combining the three cases, we obtain that $M(\infty)$ is a circle of length $2\pi$.
\end{proof}
The last piece of machinery needed is the following result of \citet{BE2013}.

\begin{lemma}[\cite{BE2013}, Theorem 1] Let $g$ be a complete Riemannian metric without conjugate points on the plane $\mathbb R^2$. Then, for every $p\in \mathbb R^2$, the area $|B(p, t)|$ of the metric ball with center $p$ and radius $r$ satisfies
\[
\liminf_{r\to\infty} \frac{|B(p, t)|}{\pi r^2} \geq 1
\]
with equality if and only if $g$ is flat.
\end{lemma}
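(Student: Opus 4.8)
The plan is to pass to geodesic polar coordinates, convert the area into a double integral of a Jacobi field, rewrite everything in terms of the total curvature of balls via Gauss--Bonnet, and then extract the needed asymptotic sign from the no-conjugate-point hypothesis. Since $(\RR^2,g)$ is complete, simply connected and free of conjugate points, $\exp_p$ is a diffeomorphism, and $g=dr^2+J^2\,d\theta^2$ in polar coordinates about $p$, where for each fixed $\theta$ the function $J(\cdot,\theta)$ solves the Jacobi equation $\partial_r^2 J+KJ=0$ with $J(0,\theta)=0$, $\partial_r J(0,\theta)=1$, and $J(r,\theta)>0$ for all $r>0$. Then $|B(p,r)|=\int_0^{2\pi}\!\int_0^r J\,ds\,d\theta$, the length of the distance circle is $L(r)=\int_0^{2\pi}J(r,\theta)\,d\theta$, and $\tfrac{d}{dr}|B(p,r)|=L(r)$.

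Next I would apply Gauss--Bonnet to the topological disk $B(p,r)$, using that the geodesic curvature of $S(r)$ is $\partial_r J/J$, to obtain $L'(r)=2\pi-c(r)$ with $c(r):=\int_{B(p,r)}K\,d\mu$. Integrating twice gives the identity
$$|B(p,r)|=\pi r^2-\int_0^r (r-u)\,c(u)\,du=\pi r^2-\int_0^{2\pi}\!\int_0^r\bigl(s-J(s,\theta)\bigr)\,ds\,d\theta.$$
Hence $\liminf_{r}|B(p,r)|/(\pi r^2)\ge 1$ is \emph{equivalent} to $\limsup_{r}\tfrac{1}{r^2}\int_0^r(r-u)c(u)\,du\le 0$, i.e.\ to the total curvature of balls being asymptotically nonpositive in a Ces\`aro sense, equivalently to $\liminf_{r}L(r)/r\ge 2\pi$.

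The crux is to deduce this asymptotic sign from the absence of conjugate points. Positive curvature is permitted locally, so the estimate cannot be pointwise; it is genuinely global. The tool is that along each ray $\gamma_\theta$ the no-conjugate-point condition yields a positive \emph{bounded} solution $\tilde w_\theta$ of the Riccati equation $\tilde w'+\tilde w^2+K=0$ (a stable Jacobi field, obtained as a limit of fields vanishing at $r=T\to\infty$). Integrating Riccati gives $\int_0^T K(\gamma_\theta(s))\,ds=\tilde w_\theta(0)-\tilde w_\theta(T)-\int_0^T\tilde w_\theta^2\le C_\theta$, so the curvature averaged along each geodesic is asymptotically nonpositive; comparing $J$ with the stable field $\tilde J_\theta$ through their constant Wronskian $J\tilde J'-J'\tilde J\equiv-\tilde J(0)$ and then averaging over $\theta$ feeds this into the displayed identity to produce $\limsup_r\tfrac{1}{r^2}\int_0^r(r-u)c(u)\,du\le 0$. \textbf{This curvature-control step is the main obstacle:} handling the \emph{weighted} integral $\int KJ$ rather than $\int K$, and upgrading the per-geodesic bound to a uniform Ces\`aro estimate after integrating in $\theta$, is precisely where completeness and the global absence of conjugate points are indispensable.

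Finally, for the rigidity statement, I would argue that if $\liminf_r|B(p,r)|/(\pi r^2)=1$ then the nonnegative defect contributed by $\int_0^{2\pi}\!\int_0^\infty\tilde w_\theta^2$ (together with the boundary terms) must vanish in the limit, forcing $\tilde w_\theta\equiv 0$ and hence, through Riccati, $K\equiv 0$; that is, $g$ is flat. Conversely, flatness gives $J(s,\theta)=s$ and $|B(p,r)|=\pi r^2$, so equality holds, completing the characterization.
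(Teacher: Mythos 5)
This lemma is not proved in the paper at all: it is quoted verbatim as Theorem~1 of Bangert--Emmerich \cite{BE2013}, so the only fair comparison is against that source, whose proof occupies a substantial part of a research article. Your reductions up to the displayed identity are correct and worth recording: with $\exp_p$ a diffeomorphism, $L'(r)=2\pi-c(r)$ follows from $\partial_r^2J=-KJ$, and integrating by parts does give
\[
|B(p,r)|=\pi r^2-\int_0^r (r-u)\,c(u)\,du .
\]
(One quibble: the claimed \emph{equivalence} with $\liminf_r L(r)/r\ge 2\pi$ is only an implication in the direction you need; $L$ could dip below $2\pi r$ along a sparse sequence without affecting the area ratio.)

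The genuine gap is exactly the step you yourself flag in bold: deducing $\limsup_r r^{-2}\int_0^r(r-u)c(u)\,du\le 0$ from the absence of conjugate points. As written, that paragraph does not constitute a proof, and its ingredients have concrete holes. First, the stable Riccati solution $\tilde w_\theta$ along a ray need not be bounded (below or above) without curvature bounds, so the inequality $\int_0^TK(\gamma_\theta(s))\,ds\le C_\theta$ is unjustified; even Hopf-type arguments only control such integrals after averaging. Second, even granting it, that bound controls the \emph{unweighted} line integral of $K$, whereas $c(u)=\int_0^{2\pi}\!\int_0^uKJ\,ds\,d\theta$ carries the weight $J$, which can grow superlinearly precisely where $K<0$; invoking the Wronskian of $J$ and $\tilde J_\theta$ does not by itself convert one into the other. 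Third, $C_\theta$ depends on $\theta$ and no integrability or uniformity in $\theta$ is established before averaging. The rigidity direction inherits all of this, since ``the defect $\int\tilde w_\theta^2$ must vanish'' presupposes the unproven identity in which that defect appears. In short: the skeleton is sound and matches the natural first reduction, but the analytic core of Bangert--Emmerich's theorem --- which is the entire content of the result --- is asserted rather than proved.
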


\begin{proof}[Proof of \ref{thm:Main}]
We shall use $M(\infty)$ to give estimates for the volume growth of the surface $M$. This is reasonable since we have the following from \cite{Har1964,Shio1985}.
\[
\lim_{t\to \infty}\frac{|\partial B(p, t)|}{2\pi t}=\lim_{t\to \infty}\frac{|B(p, t)|}{\pi r^2}
\]
Clearly from \eqref{eq:Ideal=Circle} we see that the inner metric spaces $(S(t), d_t/t)$  converges to $M(\infty)$ as $t\to \infty$ in the Gromov-Hausdorff distance. From that, we obtain
\[
\lim_{t\to \infty}\frac{|B(p, t)|}{\pi r^2}=1.
\]
Now, by the rigidity part of \cite{BE2013}, $M$ is isometric to the flat $\mathbb R^{2}$.
\end{proof}

\section{A digression into Euclid's fifth postulate}
\label{sect:Euclid}
In a Euclidean plane, the traditional formulation of Euclid's fifth postulate usually adopts the form of Playfair's axiom. However, the original formulation of Euclid was as follows:

\begin{quote}(\textbf{Euclid's fifth postulate})
If a line segment intersects two straight lines forming two interior angles on the same side that add to less than two right angles, then the two lines, if extended indefinitely, meet on that side on which the angles add to less than two right angles.
\end{quote}

We will denote by (E) to refer to this formulation, while keeping (PA) for Playfair's axiom.

In the Euclidean plane, both formulations are entirely equivalent, and therefore there is no need to use Euclid's slightly more cumbersome formulation.
Nonetheless, it is natural to reconsider the difference between (PA) and (E) for general Riemannian planes without conjugate points. Our next result shows that the conclusion of  Theorem \ref{thm:Main} holds with (E) instead of (PA) even without the requirement that $M$ admits total curvature.
%Our next result shows that Theorem \ref{thm:Main} still holds.

\begin{thm}\label{thm:Euclid original}
Let $M$ be a Riemannian plane without conjugate points, which admits total curvature, and satisfying condition (E). Then $M$ is isometric to the flat Euclidean plane $\mathbb R^2$.
\end{thm}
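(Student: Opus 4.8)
The plan is to show that condition (E) is in fact logically at least as strong as the conclusion we obtained from (PA) by proving that, under (E), parallels are again unique, so that Theorem \ref{thm:Main} becomes available. First I would unpack the contrapositive content of (E): if two lines admit a transversal producing same-side interior angles summing to strictly less than $\pi$, then the two lines must meet. Equivalently, \emph{non-intersecting} (parallel) lines crossed by a transversal force each pair of same-side interior angles to sum to \emph{at least} $\pi$; summing the two same-side pairs around the transversal, which together account for the four interior angles, shows each pair sums to \emph{exactly} $\pi$, i.e. alternate interior angles are equal. The first concrete step is therefore to establish this ``equal alternate angles'' rigidity for any pair of parallel lines under (E).

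Next I would use this angle rigidity to recover uniqueness of parallels. Given a line $\gamma$ and a point $p\notin\gamma(\RR)$, drop a geodesic transversal $\tau$ from $p$ meeting $\gamma$ (one exists since in a plane without conjugate points one can always join $p$ to a point of $\gamma$ by a minimizing geodesic, and extend it to a line). Any line through $p$ parallel to $\gamma$ must, by the previous step, make with $\tau$ an interior angle equal to the one $\gamma$ makes with $\tau$ on the opposite side; since the geodesic through $p$ in a prescribed direction is unique (no conjugate points), this pins down the parallel direction uniquely. Thus there is \emph{at most} one parallel through $p$. Combined with the existence of a non-intersecting line (which follows, as in the (PA) discussion, from the absence of conjugate points via \citet[Proposition 1.7]{BK1991}), this yields existence and uniqueness, i.e. $M$ satisfies the Parallel Axiom of Definition \ref{def:PA}. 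At that point Theorem \ref{thm:Main}, whose hypotheses (Parallel Axiom plus total curvature) are now all met, applies verbatim to conclude $M$ is isometric to $\RR^2$.

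The step I expect to be the main obstacle is the angle-rigidity claim, because (E) as stated is a single one-directional implication (a \emph{sufficient} condition for two lines to meet), and I must convert it into a genuine constraint on the geometry of parallels. The delicate point is ensuring that the two same-side angle sums are controlled \emph{simultaneously}: a priori (E) only tells me each same-side sum is $\ge\pi$, and I must rule out the possibility that one side has sum $>\pi$ compensated by the other having sum exactly $\pi$. Here I would invoke the fact that the four interior angles at the two transversal crossings are grouped into two supplementary pairs along $\tau$, so their total is $2\pi$; if both same-side sums are $\ge\pi$ and they total $2\pi$, both equal $\pi$. One must verify that these are the usual flat-looking angle relations and that no curvature defect in the quadrilateral-like region spoils the bookkeeping — but this is exactly where the convexity machinery of Lemma \ref{lem:convex} applies, since the region between two parallel lines cut by a transversal is bounded by geodesics and is convex, so Gauss--Bonnet gives the needed angle identity up to the sign of the enclosed total curvature, which the same-side inequalities then force to vanish.
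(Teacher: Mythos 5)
Your argument is correct, but it takes a genuinely different route from the paper's. You upgrade (E) to the full Parallel Axiom: the contrapositive of (E) forces each pair of same-side interior angles of two parallels to sum to at least $\pi$, the supplementary-angle identity at the two crossing points forces both sums to equal exactly $\pi$ (no Gauss--Bonnet is actually needed here --- this is a pointwise statement about angles along a geodesic, so your worry about a curvature defect is moot), and the resulting equal-alternate-angles rule pins down the direction of any parallel through $p$ with respect to a fixed transversal, giving uniqueness; existence comes from \cite{BK1991}, so (PA) holds and Theorem~\ref{thm:Main} applies. The paper instead stops at the equal-alternate-angles characterization of parallels and applies Gauss--Bonnet to arbitrarily small geodesic parallelograms to conclude $K\equiv 0$ directly, whence $M$ is the flat $\mathbb{R}^2$. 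The trade-off: your reduction is clean and reuses the main theorem verbatim, but it genuinely consumes the total-curvature hypothesis (through the ideal-boundary and Bangert--Emmerich machinery behind Theorem~\ref{thm:Main}), whereas the paper's direct Gauss--Bonnet argument shows that hypothesis is actually superfluous for condition (E) --- which is precisely the point the authors emphasize in the paragraph preceding the theorem, even though the stated hypotheses still list it. Both proofs establish the theorem as stated.
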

\begin{proof}
Proposition 1.7 in \cite{BK1991} shows that the lack of conjugate points is equivalent to the condition that
 given any geodesic $\gamma$ and any point $p$ not on
$\gamma$, there is a geodesic $\beta$ with $\beta(0)=p$ that does not intersect $\gamma$.

Axiom (E) states that any two lines making angles with respect to a common transversal with sum less than two right angles are not parallel. It follows then that parallels are exactly those lines that make equal alternate angles. The standard argument of applying Gauss-Bonnet on arbitrary small parallelograms now yields the claim.
\end{proof}

%% ----------------------------------------------------------------
%% ----------------------------------------------------------------
% BEGIN BIBLIOGRAPHY----------------------------------------------
% ----------------------------------------------------------------

\bibliographystyle{plainnat}

\bibliography{PA}

\end{document}